\documentclass[11pt,reqno]{amsart}

\newcommand\version{September 14, 2020}


\usepackage{amsmath,amsfonts,amsthm,amssymb,amsxtra}
\usepackage{bbm} 
\usepackage{tikz}



\setlength{\voffset}{-.7truein}
\setlength{\textheight}{8.8truein}
\setlength{\textwidth}{6.75truein}
\setlength{\hoffset}{-.95truein}


\newtheorem{theorem}{Theorem}

\newtheorem{lemma}[theorem]{Lemma}

\theoremstyle{definition}

\theoremstyle{remark}

\newtheorem{remark}[theorem]{Remark}




\renewcommand{\epsilon}{\varepsilon}

\renewcommand{\phi}{\varphi}
\newcommand{\R}{\mathbb{R}}


\begin{document}

\title[On the convolution inequality $f \geqslant f\star f$--- \version]{On the convolution inequality $\boldsymbol{f} \boldsymbol{\geqslant} \boldsymbol{f} \boldsymbol{\star} \boldsymbol{f} $}

\author{Eric A. Carlen}
\address[Eric A. Carlen]{Department of Mathematics, Hill Center,
Rutgers University, 110 Frelinghuysen Road, Piscataway, NJ 08854-8019, USA}
\email{carlen@math.rutgers.edu}

\author{Ian Jauslin}
\address[Ian Jauslin]{Department of Physics, Princeton University, Washington Road, Princeton, NJ 08544, USA}
\email{ijauslin@princeton.edu}

\author{Elliott H. Lieb}
\address[Elliott H. Lieb]{Departments of Mathematics and Physics, Princeton University, Washington Road, Princeton, NJ 08544, USA}
\email{lieb@princeton.edu}

\author{Michael P. Loss}
\address[Michael P. Loss]{School of Mathematics, Georgia Institute of Technology, Atlanta GA 30332}
\email{loss@math.gatech.edu}

\begin{abstract}
We consider the inequality $f \geqslant f\star f$  for real functions in $L^1(\R^d)$ where $f\star f$ denotes the convolution of $f$ with itself. We show that all such functions $f$ are non-negative, 
which is not the case for the same inequality in $L^p$ for any $1 < p \leqslant 2$, for which the convolution is defined. We also show that all solutions in $L^1(\R^d)$ satisfy $\int_{\R^d}f(x){\rm d}x \leqslant \tfrac12$.  Moreover, if 
$\int_{\R^d}f(x){\rm d}x = \tfrac12$, then $f$ must decay fairly slowly: $\int_{\R^d}|x| f(x){\rm d}x = \infty$, and this is sharp since for all 
$r< 1$, there are   solutions with  $\int_{\R^d}f(x){\rm d}x = \tfrac12$ and $\int_{\R^d}|x|^r f(x){\rm d}x <\infty$.  However, if 
$\int_{\R^d}f(x){\rm d}x  = : a < \tfrac12$, the decay at infinity can be much more rapid: we show that for all $a<\tfrac12$, there are solutions such that for some $\epsilon>0$, 
$\int_{\R^d}e^{\epsilon|x|}f(x){\rm d}x < \infty$. 
\end{abstract}

\thanks{\copyright\, 2020 by the authors. This paper may be
reproduced, in its entirety, for non-commercial purposes.\\
U.S.~National Science Foundation grants DMS-1764254 (E.A.C.),  DMS-1802170 (I.J.)  and  DMS-1856645 (M.P.L)  are gratefully acknowledged.}

\maketitle


\indent Our subject is  the set of real,  integrable solutions of the inequality 
\begin{equation}
  f(x)\geqslant f\star f(x)
  ,\quad\forall x\in\mathbb R^d\ ,
  \label{ineq}
\end{equation}
where $f\star f(x)$ denotes the convolution $f\star f(x) = \int_{\R^d} f(x-y) f(y){\rm d}y$. By Young's inequality \cite[Theorem 4.2]{LL96}, for all $1 \leqslant p\leqslant 2$ and all $f\in L^p(\R^d)$, $f \star f$ is well defined as an element of 
$L^{p/(2-p)}(\R^d)$.  Thus, one may consider the inequality \eqref{ineq} in $L^p(\R^d)$ for all $1 \leqslant p \leqslant 2$, but the case $p=1$ is special:  
the solution set of \eqref{ineq} is restricted in a number of surprising ways. Integrating both sides of \eqref{ineq}, one sees immediately that $\int_{\R^d} f(x){\rm d}x \leqslant 1$. 
We prove that, in fact, all integrable solutions satisfy $\int_{\R^d} f(x){\rm d}x \leqslant \tfrac12$, and this upper bound is sharp.

Perhaps even more surprising, we prove that all integrable solutions of \eqref{ineq} are non-negative.  This is {\em not true} for solutions in $L^p(\R^d)$, $ 1 < p \leqslant 2$. 
For  $f\in L^p(\R^d)$,  $1\leqslant p \leqslant 2$, the Fourier transform $\widehat{f}(k) = \int_{\R^d}e^{-i2\pi k\cdot x} f(x){\rm d}x$ is well defined as an element of $L^{p/(p-1)}(\R^d)$.  If $f$ solves the equation $f = f\star f$,
then $\widehat{f} = \widehat{f}^2$, and hence $\widehat{f}$ is the indicator function of a measurable set. By the Riemann-Lebesgue Theorem, if $f\in L^1(\R^d)$, then  
$\widehat{f}$ is continuous and vanishes at infinity, and the only such indicator function is the indicator function of the empty set. Hence the only integrable solution of 
$f = f\star f$ is the trivial solution $f= 0$. However, for $1 < p \leqslant 2$, solutions abound:  take $d=1$ and define $g$ to be the indicator function of the interval $[-a,a]$.  Define 
\begin{equation}\label{exam}
f(x) = \int_{\R} e^{-i2\pi k x} g(k){\rm d}k =  \frac{ \sin 2\pi xa}{\pi x}\ ,
 \end{equation}
which is not integrable, but which belongs to $L^p(\R)$ for all $p> 1$.  By the Fourier Inversion Theorem $\widehat{f} = g$. Taking products, one gets examples in any dimension.

To construct a family of solutions to \eqref{ineq}, fix $a,t> 0$,  and define  $g_{a,t}(k) = a e^{-2\pi |k| t}$. By \cite[Theorem 1.14]{SW71},
$$
f_{a,t}(x) =  \int_{\R^d} e^{-i2\pi k x} g_{a,t}(k){\rm d}k = a\Gamma((d+1)/2) \pi^{-(d+1)/2} \frac{t}{(t^2 + x^2)^{(d+1)/2}}\ .
$$
Since $g_{a,t}^2(k) =  g_{a^2,2t}$, $f_{a,t}\star f_{a,t} = f_{a^2,2t}$, Thus, $f_{a,t} \geqslant f_{a,t}\star f_{a,t}$ reduces to 
$$
\frac{t}{(t^2 + x^2)^{(d+1)/2}} \geqslant  \frac{2at}{(4t^2 + x^2)^{(d+1)/2}}
$$
which is satisfied for all $a \leqslant 1/2$. Since  $\int_{\R^d}f_{a,t}(x){\rm d}x =a$, this provides a class of solutions of \eqref{ineq} that are non-negative and satisfy 
\begin{equation}\label{half}
\int_{\R^d}f(x){\rm d}x \leqslant \frac12\ ,
\end{equation}
 all of which have fairly slow decay at infinity, so that in every case, 
\begin{equation}\label{tail}
 \int_{\R^d}|x|f(x){\rm d}x =\infty \ .
 \end{equation}

Our results show that this class of examples of integrable solutions of \eqref{ineq} is surprisingly typical  of {\em all} integrable solutions: every real integrable solution 
$f$  of \eqref{ineq} is positive, satisfies \eqref{half},
and if there is equality in \eqref{half}, $f$ also satisfies \eqref{tail}.  The positivity of all real solutions of \eqref{ineq} in $L^1(\R^d)$ may be considered surprising since it is 
false in $L^p(\R^d)$ for all $p > 1$, as the example \eqref{exam} shows.   We also show that when strict inequality holds in \eqref{half} for a solution $f$ of \eqref{ineq}, it is possible for 
$f$ to have rather fast decay; we construct examples such that $\int_{\R^d}e^{\epsilon|x|}f(x){\rm d}x < \infty$ for some $\epsilon> 0$.  The conjecture that integrable solutions of  \eqref{ineq}  
are necessarily positive was motivated by recent work \cite{CJL20,CJL20b} on a  partial differential equation involving a quadratic nonlinearity of $f\star f$ type, and the result proved here is the key to 
the proof of positivity for solutions of this partial differential equation; see \cite{CJL20}.  Autoconvolutions $f\star f$ have been studied extensively; see \cite{MV10}  and the work quoted there. However, the questions investigated by these authors are quite different from those  considered here.

\begin{theorem}\label{theo:positivity}
Let  $f$ be a real valued function in $L^1(\R^d)$ such that
\begin{equation}\label{uineq}
   f(x) - f\star f(x) =: u(x) \geqslant 0
\end{equation}
for all $x$.  Then  $\int_{\R^d} f(x)\ dx\leqslant\frac12$, 
and $f$ is given by the  series
\begin{equation}
f(x) = \frac{1}{2} \sum_{n=1}^\infty  c_n 4^n (\star^n u)(x)
\label{fun}
\end{equation} 
which converges in $L^1(\R^d)$, and
where the $c_n\geqslant0$ are the Taylor coefficients in the expansion of $\sqrt{1-x}$
\begin{equation}\label{3half}
  \sqrt{1-x}=1-\sum_{n=1}^\infty c_n x^n
  ,\quad
  c_n=\frac{(2n-3)!!}{2^nn!}  \sim n^{-3/2}
\end{equation}
In particular, $f$ is positive.  Moreover, if $u\geqslant 0$  is any integrable function with $\int_{\R^d}u(x){\rm d}x \leqslant \tfrac14$, then the sum on the right in \eqref{fun} defines an integrable function $f$ that satisfies \eqref{uineq}, and $\int_{\R^d}f(x){\rm d}x = \tfrac12$ if and only if $\int_{\R^d}u(x){\rm d}x= \tfrac14$.
\end{theorem}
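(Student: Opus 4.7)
The theorem has two halves: a construction half, asserting that every $u \geqslant 0$ with $\int u \leqslant \tfrac14$ gives rise via \eqref{fun} to an integrable $f$ satisfying \eqref{uineq}; and a uniqueness half, asserting that every real integrable $f$ satisfying \eqref{uineq} is of this form and hence satisfies $\int f \leqslant \tfrac12$. The formal idea behind both halves is that $f - f\star f = u$ becomes, in Fourier space, the scalar quadratic $\hat f - \hat f^2 = \hat u$ whose ``small'' root is $\hat f = \tfrac12(1 - \sqrt{1 - 4\hat u})$, and the series in \eqref{fun} is exactly the Taylor expansion of this root on the convolution side.

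For the construction half, I will fix $u \geqslant 0$ with $\beta := \int u \leqslant \tfrac14$, apply Young's inequality to obtain $\|\star^n u\|_1 \leqslant \beta^n$, and then use \eqref{3half} at $x = 4\beta \leqslant 1$ to conclude that $F := \tfrac12 \sum_{n \geqslant 1} c_n 4^n (\star^n u)$ converges absolutely in $L^1$, with $\|F\|_1 \leqslant \tfrac12 (1 - \sqrt{1 - 4\beta})$ and $F \geqslant 0$ termwise. To verify $F - F\star F = u$, I will expand $F \star F$ as a Cauchy product of series of $L^1$ functions and invoke the coefficient identity $2 c_N = \sum_{n=1}^{N-1} c_n c_{N-n}$ for $N \geqslant 2$, obtained by squaring $S(x) := 1 - \sqrt{1-x} = \sum c_n x^n$ and using $S^2 = 2S - x$; together with $c_1 = \tfrac12$, this makes the series collapse to $u$. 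Integrating termwise (justified by non-negativity and Fubini) then gives $\int F = \tfrac12(1 - \sqrt{1 - 4\beta})$, which equals $\tfrac12$ iff $\beta = \tfrac14$.

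For the uniqueness half, I will let $f \in L^1(\R^d)$ be any real solution of \eqref{uineq}, set $\alpha := \int f$ and $\beta := \int u$, and first note that integrating \eqref{uineq} gives $\beta = \alpha - \alpha^2 \leqslant \tfrac14$ automatically. Taking Fourier transforms yields $\hat f, \hat u \in C_0(\R^d)$ with $\hat f(k) - \hat f(k)^2 = \hat u(k)$. Since $u \geqslant 0$, one has $|\hat u(k)| \leqslant \hat u(0) = \beta \leqslant \tfrac14$, so $1 - 4\hat u(k)$ lies in the closed right half-plane and the principal square root is unambiguous; at each $k$, $\hat f(k)$ must equal one of $\tfrac{1 \pm \sqrt{1 - 4\hat u(k)}}{2}$. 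The key step is branch selection: I will show that $\hat u(k) = \tfrac14$ forces either $u \equiv 0$ or $k = 0$, because equality $|\hat u(k)| = \int u$ at $k \neq 0$ would require $e^{-2\pi i k \cdot x}$ to be constant on the support of $u$, placing that support on a null hyperplane. On the connected complement of this exceptional point, continuity of $\hat f$ keeps it on a single branch, and $\hat f(k) \to 0$ at infinity selects the branch $\hat F(k) := \tfrac12(1 - \sqrt{1 - 4\hat u(k)})$; continuity at $k = 0$ extends the identification to all of $\R^d$. Fourier uniqueness in $L^1$ gives $f = F$ a.e., and setting $k = 0$ delivers $\alpha = \tfrac12(1 - \sqrt{1 - 4\beta}) \leqslant \tfrac12$.

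The main obstacle will be this branch-selection step. In the degenerate case $\beta = \tfrac14$, the two square-root branches coalesce at $k = 0$, so without the sharp bound $|\hat u(k)| < \int u$ for $k \neq 0$ (when $u \not\equiv 0$ and $u \geqslant 0$), the exceptional set could disconnect $\R^d$ and permit $\hat f$ to jump to the ``wrong'' branch $\tfrac12(1 + \sqrt{1 - 4\hat u(k)})$ which does not vanish at infinity. Everything else---Young's inequality, the algebraic identity for the $c_n$, termwise integration, and the final Fourier uniqueness in $L^1$---is essentially bookkeeping.
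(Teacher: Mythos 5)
Your proposal follows essentially the same route as the paper: pass to Fourier space, reduce \eqref{uineq} to the scalar quadratic $\widehat f-\widehat f^2=\widehat u$, observe $|\widehat u(k)|\leqslant\widehat u(0)\leqslant\tfrac14$ with strict inequality for $k\neq 0$ (so the discriminant $1-4\widehat u(k)$ vanishes only at $k=0$), use continuity plus decay at infinity to pin $\widehat f$ to the ``minus'' branch, and then expand $\sqrt{1-4\widehat u}$ in the $c_n$-series and invert. Your verification of the converse via the Cauchy product and the identity $2c_N=\sum_{n=1}^{N-1}c_nc_{N-n}$ in physical space is a harmless variant of the paper's one-line Fourier-side check. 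One small correction: you appeal to connectedness of $\R^d\setminus\{0\}$, which fails when $d=1$; there the argument still closes because both unbounded components see $\widehat f\to 0$, so the ``minus'' branch is forced on each separately.
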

\begin{proof} Note that $u$ is integrable. Let $a := \int_{\R^d}f(x){\rm d}x$ and $b := \int_{\R^d}u(x){\rm d}x \geqslant 0$. Fourier transforming,
\eqref{uineq} becomes
\begin{equation} \label{ft}
\widehat f(k) = \widehat f(k)^2 +\widehat u(k)\ .
\end{equation}
At $k=0$, $a^2 - a = -b$, so that $\left(a - \tfrac12\right)^2  = \tfrac14 - b$.  Thus $0 \leqslant b \leqslant\tfrac14$.  Furthermore, since  $u \geqslant 0$,
\begin{equation}
  |\widehat u(k)|\leqslant\widehat u(0) \leqslant \tfrac14
\end{equation}
and the first   inequality is strict  for $k\neq 0$. Hence for $k\neq 0$,  $\sqrt{1-4\widehat u(k)} \neq 0$.  By the Riemann-Lebesgue Theorem, $\widehat{f}(k)$ and $\widehat{u}(k)$ are both 
continuous and vanish at infinity, and hence we must have that 
\begin{equation}
  \widehat f(k)=\tfrac12-\tfrac12\sqrt{1-4\widehat u(k)}
  \label{hatf}
\end{equation}
for all sufficiently large $k$, and in any case $\widehat f(k)= \frac12\pm\frac12\sqrt{1-4\widehat u(k)}$.  But by continuity and the fact that  $\sqrt{1-4\widehat u(k)} \neq 0$ for any $k\neq 0$, the sign cannot switch.  
Hence \eqref{hatf} is valid for all $k$, including $k=0$, again by continuity.  At $k=0$,  $a = \tfrac12 - \sqrt{1-4b}$, which proves \eqref{half}.
The fact that $c_n$ as specified in \eqref{3half} satisfies $c_n \sim n^{-3/2}$ is a simple application of Stirling's formula, and it shows that the power series for $\sqrt{1-z}$ converges absolutely and 
uniformly everywhere on the closed unit disc. Since $|4 \widehat u(k)| \leqslant 1$,
${\displaystyle 
 \sqrt{1-4 \widehat u(k)} = 1 -\sum_{n=1}^\infty c_n (4 \widehat u(k))^n}$. Inverting the Fourier transform, yields \eqref{fun}, and since $\int_{\R^d} 4^n\star^n u(x){\rm d}x \leqslant 1$, 
 the convergence of the sum in $L^1(\R^d)$ follows from the convergence of $\sum_{n=1}^\infty c_n$. The final statement follows from the fact that if $f$ is defined in terms of $u$ in this manner, then \eqref{hatf} is 
 valid, and then \eqref{ft} and \eqref{uineq} are satisfied.
\end{proof}

\begin{theorem}\label{theo:decay}
Let $f\in L^1(\R^d)$ satisfy \eqref{ineq} and $\int_{\R^d} f(x)\ dx=\tfrac12$. Then 
$\int_{\R^d}|x| f(x)\ dx=\infty$.
\end{theorem}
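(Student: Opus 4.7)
The plan is to argue by contradiction: assume $\int_{\R^d}|x|f(x)\,dx<\infty$, use the series representation of Theorem \ref{theo:positivity} to force $\nabla\widehat f(0)=0$, then exploit the identity $\widehat u(k)=\tfrac14-(\widehat f(k)-\tfrac12)^2$ to get second-order vanishing of $\tfrac14-\widehat u(k)$ at $k=0$, and finally apply monotone convergence to conclude $\int|x|^2u\,dx=0$, contradicting $\int u=\tfrac14$ (which holds by Theorem \ref{theo:positivity} because $\int f=\tfrac12$).

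First I would set $u:=f-f\star f$ and $v:=4u$, so that Theorem \ref{theo:positivity} makes $v$ a probability density with $2f=\sum_{n\ge1}c_n\,(\star^n v)$ in $L^1$. Let $X_1,X_2,\ldots$ be i.i.d.\ with density $v$ and $S_n:=X_1+\cdots+X_n$; then $\int|x|(\star^n v)\,dx=E|S_n|$. By Tonelli,
\begin{equation*}
  \sum_{n\ge1}c_n\,E|S_n|=2\int_{\R^d}|x|f(x)\,dx<\infty.
\end{equation*}
The $n=1$ term forces $E|X_1|<\infty$, so $v$ has a well-defined mean $\mu_v$. From $E|S_n|\ge|ES_n|=n|\mu_v|$ together with the divergence of $\sum_n nc_n$ (a consequence of $c_n\sim n^{-3/2}$), I conclude $\mu_v=0$. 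Fubini (justified by the absolute bound just displayed) then gives $\int xf\,dx=\tfrac12\sum_n c_n\cdot n\mu_v=0$, i.e.\ $\nabla\widehat f(0)=0$.

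Since $\int|x|f<\infty$, $\widehat f$ is $C^1$; combined with $\widehat f(0)=\tfrac12$ and $\nabla\widehat f(0)=0$, Taylor's theorem gives $\widehat f(k)-\tfrac12=o(|k|)$ as $k\to0$. The identity $\widehat u=\widehat f-\widehat f^{\,2}=\tfrac14-(\widehat f-\tfrac12)^{2}$ now yields $\tfrac14-\widehat u(k)=o(|k|^2)$; taking real parts and recalling $u\ge0$,
\begin{equation*}
  \int_{\R^d}\bigl(1-\cos(2\pi k\cdot x)\bigr)u(x)\,dx=\tfrac14-\mathrm{Re}\,\widehat u(k)=o(|k|^2)\quad(k\to0).
\end{equation*}
Using the elementary inequality $1-\cos\theta\ge c\min(\theta^2,1)$ for some universal $c>0$, I would fix a unit vector $\hat k$, set $k=t\hat k$, and divide by $t^2$, obtaining
\begin{equation*}
  \int_{\R^d}\min\!\bigl(4\pi^2(\hat k\cdot x)^2,\,1/t^2\bigr)u(x)\,dx=o(1)\quad(t\to 0^+).
\end{equation*}
The integrand is non-negative and monotonically increases to $4\pi^2(\hat k\cdot x)^2u(x)$ as $t\downarrow0$, so monotone convergence forces $\int(\hat k\cdot x)^2u(x)\,dx=0$. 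Summing over an orthonormal basis yields $\int|x|^2u\,dx=0$; since $u\ge0$, $u\equiv0$ a.e., contradicting $\int u=\tfrac14$.

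The main obstacle I anticipate is the first step, establishing $\nabla\widehat f(0)=0$: the trivial identity $\widehat u=\widehat f(1-\widehat f)$ at $\widehat f(0)=\tfrac12$ yields $\nabla\widehat u(0)=0$ for free but gives no information on $\nabla\widehat f(0)$, so I must invoke the series representation together with the divergence $\sum_n nc_n=\infty$ (from the $c_n\sim n^{-3/2}$ asymptotic of the Taylor coefficients of $\sqrt{1-x}$) to pin down the first moment of $f$. Once $\nabla\widehat f(0)=0$ is in hand, the remaining Fourier and monotone-convergence steps are essentially forced.
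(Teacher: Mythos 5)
Your argument is correct in outline and genuinely different from the paper's, which proves the theorem probabilistically: it shows by the Central Limit Theorem (supplemented by a Berry--Esseen estimate, Lemma~\ref{CLTL}, to handle the infinite-variance case) that $\int |x|\,\star^n w\,{\rm d}x$ grows at least like $\sqrt{n}$, so that $\sum c_n\int|x|\star^n w \,{\rm d}x$ diverges. You instead turn the contradiction hypothesis $\int|x|f<\infty$ into the Fourier-analytic statement that $\widehat f$ is $C^1$ with $\nabla\widehat f(0)=0$, feed it through the algebraic identity $\widehat u = \tfrac14-(\widehat f-\tfrac12)^2$, and conclude that $u$ has vanishing second moment, hence $u\equiv0$, contradicting $\int u=\tfrac14$. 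Your route is more elementary --- it dispenses with the CLT, with Berry--Esseen, and with the separate lemma from \cite{CGR08} --- and it also reveals \emph{why} first-moment finiteness is impossible in the critical case: it would force second-order vanishing of $\tfrac14-\widehat u$ at the origin, which is incompatible with $u\geqslant0$, $u\not\equiv0$. The preliminary steps using Tonelli, the $n=1$ term to get $E|X_1|<\infty$, the divergence of $\sum n c_n$ to force $\mu_v=0$, and the Fubini justification for $\int x f=0$ are all sound.

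There is one genuine error in the final analytic step: the inequality $1-\cos\theta\geqslant c\min(\theta^2,1)$ is false (take $\theta=2\pi$: the left side vanishes while the right side is $c>0$). As written, the monotone-convergence step is therefore not justified. The fix is routine: either apply Fatou's lemma directly, using the pointwise limit
\begin{equation*}
\lim_{t\to0}\frac{1-\cos(2\pi t\,\hat k\cdot x)}{t^2}=2\pi^2(\hat k\cdot x)^2
\end{equation*}
to conclude $\int 2\pi^2(\hat k\cdot x)^2u(x)\,{\rm d}x\leqslant\liminf_{t\to0}\,t^{-2}\int(1-\cos(2\pi t\,\hat k\cdot x))u(x)\,{\rm d}x=0$; or use $1-\cos\theta\geqslant c\theta^2$ only for $|\theta|\leqslant1$, restrict the integral to $\{|\hat k\cdot x|\leqslant(2\pi t)^{-1}\}$, and let $t\downarrow0$ by monotone convergence over this exhausting family of sets. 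Either repair gives $\int(\hat k\cdot x)^2u=0$, and the rest of your argument goes through.
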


\begin{proof}  If  $\int_{\R^d} f(x)\ dx=\tfrac12$, $\int_{\R^d} 4u(x)\ dx=1$, then $w(x) = 4u(x)$ is a probability density, and we can write $f(x) = \tfrac12\sum_{n=1}^\infty c_n \star^n w$.  Aiming for a contradiction, suppose that $|x|f(x)$ is integrable. Then $|x|w(x)$ is integrable.  Let $m:= \int_{\R^d}xw(x){\rm d} x$.  Since first moments add under convolution, the trivial inequality $|m||x| \geqslant m\cdot x$ yields 
$$|m|\int_{\R^d} |x| \star^nw(x){\rm d}x \geqslant  \int_{\R^d} m\cdot x \star^nw(x){\rm d}x = n|m|^2\ .$$
 It follows that  $\int_{\R^d} |x| f(x){\rm d}x \geqslant \frac{|m|}{2}\sum_{n=1}^\infty nc_n = \infty$. Hence $m=0$.

Suppose temporarily that in addition, $|x|^2w(x)$ is integrable. Let $\sigma^2$ be the variance of $w$; i.e., $\sigma^2 = \int_{\R^d}|x|^2w(x){\rm d}x$.
Define the function $\varphi(x) = \min\{1,|x|\}$.  Then 
$$
\int_{\R^d}|x| \star^n w(x){\rm d}x =  \int_{\R^d}|n^{1/2}x| \star^n w(n^{1/2}x)n^{d/2}{\rm d}x  \geqslant  n^{1/2} \int_{\R^d}\varphi(x)\star^n w(n^{1/2}x)n^{d/2}{\rm d}x.
$$
By the Central Limit Theorem,  since $\varphi$ is bounded and continuous,
\begin{equation}\label{CLT}
\lim_{n\to\infty}  \int_{\R^d}\varphi(x)\star^n w(n^{1/2}x)n^{d/2}{\rm d}x =  \int_{\R^d}\varphi(x) \gamma(x){\rm d}x =: C > 0
\end{equation}
where $\gamma(x)$ is a centered Gaussian probability density with variance $\sigma^2$. 

This shows that  there is a  $\delta> 0$  such that  for all sufficiently large $n$, $\int_{\R^d}|x| \star^n w(x){\rm d}x \geqslant \sqrt{n}\delta$, and then since $c_n\sim n^{-3/2}$, $\sum_{n=1}^\infty c_n \int_{\R^d}|x| \star^n w(x){\rm d}x= \infty$. 

 To remove the hypothesis that $w$ has finite variance, note that if $w$ is a probability density with zero mean and infinite variance, $\star^n w(n^{1/2}x)n^{d/2}$ is ``trying'' to converge to a ``Gaussian of infinite variance''. In particular, one would expect that for all $R>0$, 
\begin{equation}\label{CLT2}
\lim_{n\to\infty} \int_{|x| \leqslant R}\star^n w(n^{1/2}x)n^{d/2}{\rm d}x = 0\ , 
\end{equation} so that  the limit in \eqref{CLT} has the value $1$.  The proof then proceeds as above. The fact that \eqref{CLT2} is valid is a consequence  of Lemma~\ref{CLTL} below, which is closely based on  the proof of \cite[Corollary 1]{CGR08}.\end{proof}

\begin{theorem}\label{theo:decay3}
 Let $f\in L^1(\R^d)$ satisfy \eqref{uineq}, $\int_{\R^d} xu(x){\rm d}x = 0$  and  
$\int_{\R^d} |x|^2u(x){\rm d}x<\infty$. Then for all $0\leqslant p<1$,
  \begin{equation}
    \int_{\R^d} |x|^pf(x)\ dx<\infty.
  \end{equation}
\end{theorem}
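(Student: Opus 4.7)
The plan is to build directly on the series representation from Theorem~\ref{theo:positivity} and reduce the problem to a classical moment estimate for random walks with zero mean and finite variance.

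Setup. By Theorem~\ref{theo:positivity}, $f=\tfrac12\sum_{n=1}^\infty c_n 4^n\,\star^n u$ with $c_n\sim n^{-3/2}$. Let $b:=\int_{\R^d}u\leqslant\tfrac14$; if $b=0$ then $u=0$ and $f=0$, so assume $b>0$ and set $w:=u/b$, which is a probability density on $\R^d$ with $\int xw(x)\,dx=0$ and $\sigma^2:=\int|x|^2w(x)\,dx=b^{-1}\int|x|^2u(x)\,dx<\infty$. Then $\star^n u=b^n\,\star^n w$, and if $X_1,X_2,\dots$ are i.i.d.\ with density $w$ and $S_n=X_1+\cdots+X_n$, the measure $\star^n w$ is the law of $S_n$.

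Moment bound on each term. Because the $X_i$ have zero mean and finite variance,
\[
E|S_n|^2=nE|X_1|^2=n\sigma^2.
\]
Since $0\leqslant p\leqslant 2$ (the case $p=0$ is trivial, giving back $\int f\leqslant\tfrac12$), the map $t\mapsto t^{p/2}$ is concave on $[0,\infty)$, so Jensen's inequality gives
\[
\int_{\R^d}|x|^p\,\star^n w(x)\,dx=E\bigl[(|S_n|^2)^{p/2}\bigr]\leqslant (E|S_n|^2)^{p/2}=(n\sigma^2)^{p/2}.
\]
Multiplying by $b^n$,
\[
\int_{\R^d}|x|^p\,\star^n u(x)\,dx\leqslant b^n(n\sigma^2)^{p/2}.
\]

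Summation. The partial sums of the representation of $f$ are non-negative and increase monotonically, converging in $L^1$ and, after extracting a subsequence, pointwise a.e.\ to $f$; monotone convergence therefore justifies termwise integration of $|x|^p f(x)$. Combining with the previous bound,
\[
\int_{\R^d}|x|^p f(x)\,dx\leqslant\frac{\sigma^p}{2}\sum_{n=1}^\infty c_n(4b)^n n^{p/2}.
\]
Since $4b\leqslant 1$ and $c_n\sim n^{-3/2}$, the general term is $O(n^{p/2-3/2})$, and the series converges as soon as $p/2-3/2<-1$, i.e.\ $p<1$. This is the desired conclusion, and it is sharp in the borderline case $b=\tfrac14$ in agreement with Theorem~\ref{theo:decay}.

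The argument is largely bookkeeping; the only non-trivial input is the Jensen/Lyapunov bound $E|S_n|^p\leqslant (n\sigma^2)^{p/2}$ for $p\leqslant 2$, which is precisely what is needed because it is tight enough to tame the polynomial factor $n^{p/2}$ against $c_n\sim n^{-3/2}$ exactly when $p<1$. The potential obstacle, namely the worst case $4b=1$ where the factor $(4b)^n$ provides no help, is absorbed by the decay rate of $c_n$.
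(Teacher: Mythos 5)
Your proof is correct and follows essentially the same route as the paper: normalize $u$ to a zero-mean, finite-variance probability density $w$, use additivity of variance under convolution together with Jensen/Lyapunov to get $\int |x|^p \star^n w \leqslant (n\sigma^2)^{p/2}$, and then sum against $c_n\sim n^{-3/2}$ to get convergence precisely for $p<1$. The only differences are cosmetic (you phrase the moment bound probabilistically via $S_n$, and you explicitly invoke monotone convergence to justify termwise integration, which the paper leaves implicit).
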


\begin{proof}  We may suppose that $f$ is not identically $0$.  Let  $t := 4\int_{\R^d}u(x){\rm d}x \leqslant 1$.  Then $t> 0$.  Define $w := t^{-1}4u$; $w$ is a probability density and 
\begin{equation}\label{tfor}
f(x) = \tfrac12\sum_{n=1}^\infty c_n t^n \star^n w(x)\ . 
\end{equation}
By  hypothesis, $w$ has a  zero mean  and  variance  $\sigma^2 =   \int_{\R^d} |x|^2 w(x){\rm d}x < \infty$. Since variance is additive under convolution,
$$
\int_{\R^d} |x|^2 \star^n w(x){\rm d}x   = n\sigma^2\ .
$$
By H\"older's inequality,  for all $0 < p < 2$,
$\int_{\R^d} |x|^p \star^n w(x){\rm d}x   \leqslant  (n\sigma^2)^{p/2}$.
It follows that for $0 < p < 1$,
$$
\int_{\R^d} |x|^p f(x){\rm d}x   \leqslant  \frac12(\sigma^2)^{p/2} \sum_{n=1}^\infty n^{p/2} c_n  < \infty\ ,
$$
again using the fact that $c_n\sim n^{-3/2}$. 
\end{proof}

\begin{remark} In the subcritical case $\int_{\R^d}f(x){\rm d}x < \tfrac12$, the hypothesis that $\int_{\R^d} x u(x) {\rm d}x = 0$ is superfluous, and one can conclude more. In this case the quantity $t$  in \eqref{tfor} satisfies $0 < t < 1$, and  if we let $m$ denote the mean of $w$,
$\int_{\R^d} |x|^2 \star^n w(x){\rm d}x   =n^2|m|^2+ n\sigma^2$.  For $0<t<1$, $\sum_{n=1}^\infty n^2 c_n t^n < \infty$ and we conclude that $\int_{\R^d} |x|^2 f(x){\rm d}x < \infty$.  Finally, the final statement of Theorem~\ref{theo:positivity} shows that critical case functions $f$ satisfying the hypotheses of Theorem~\ref{theo:decay} are readily constructed.
\end{remark}

Theorem \ref{theo:decay} implies that when $\int f=\frac12$, $f$ cannot decay faster than $|x|^{-(d+1)}$.  However, integrable solutions $f$  of \eqref{ineq} such that $\int_{\R^d}f(x){\rm d}x < \tfrac12$
can decay more rapidly, as indicated in the previous remark. In fact, they may even have  finite exponential moments,  as we now show.

Consider a non-negative, integrable function $u$, which integrates to $r<\frac14$, and satisfies
\begin{equation}
  \int_{\R^d} u(x)e^{\lambda|x|}{\rm d}x < \infty
\end{equation}
for some $\lambda>0$.
The Laplace transform of $u$ is
$ \widetilde u(p):=\int e^{-px}u(x)\ {\rm d} x$ which is analytic for $|p|<\lambda$, and $\widetilde u(0) < \tfrac14$.
Therefore, there exists $0<\lambda_0\leqslant \lambda$ such that, for all $|p|\leqslant\lambda_0$,
  $\widetilde u(p)<\tfrac14$.
By Theorem \ref{theo:positivity},
${\displaystyle 
  f(x):=\frac12\sum_{n=1}^\infty 4^nc_n(\star^n u)(x)}$
is an integrable solution of \eqref{ineq}.  For  
 $|p|\leqslant\lambda_0$, it has a well-defined Laplace transform $ \widetilde f(p)$ given by 
\begin{equation}
  \widetilde f(p)=\int e^{-px}f(x)\ dx=\frac12(1-\sqrt{1-4\widetilde u(p)})
\end{equation}
which is analytic for $|p|\leqslant \lambda_0$.
Note that
${\displaystyle e^{s|x|} \leqslant \prod_{j=1}^d e^{|sx_j|} \leqslant \frac{1}{d}\sum_{j=1}^d e^{d|sx_j|} \leqslant \frac{2}{d}\sum_{j=1}^d \cosh(dsx_j)}$.
Thus, for $|s|< \delta := \lambda_0/d$, $\int_{\R^d}  \cosh(dsx_j)f(x){\rm d}x < \infty$ for each $j$, and hence
$|s| < \delta$, $\int_{\R^d} e^{s|x|}f(x){\rm d}x < \infty$. 

However, there are no integrable solutions of \eqref{ineq} that have compact support: We have seen that all  solutions of \eqref{ineq} are non-negative, and if $A$ is the support of a non-negative integrable function, the Minkowski sum $A+A$ is the support of $f\star f$.

\begin{remark} One might also consider the inequality $f \leqslant f \star f$ in $L^1(\R^d)$, but it is simple to construct  solutions that  have both signs. Consider any radial Gaussian probability density $g$,  
Then $g\star g(x) \geqslant g(x)$ for all sufficiently large $|x|$, and taking  $f:= ag$ for $a$ sufficiently large, we obtain $f< f\star f$ everywhere. Now on a small neighborhood of the origin, replace the value of 
$f$ by $-1$. If the region is taken small enough, the new function $f$ will still satisfy $f < f\star f$ everywhere.  
\end{remark}

We close with a lemma validating \eqref{CLT2} that is closely based on a  construction in \cite{CGR08}.  

\begin{lemma}\label{CLTL}
Let $w$ be a mean zero, infinite variance probability density on $\R^d$. Then for all $R>0$, \eqref{CLT2} is valid.
\end{lemma}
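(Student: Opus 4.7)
The plan is to reduce the $d$-dimensional lemma to dimension one and then establish the one-dimensional case by Fourier analysis. Since $\sum_{i=1}^d\int_{\R^d}x_i^2 w(x)\,{\rm d}x=\int_{\R^d}|x|^2 w(x)\,{\rm d}x=\infty$, there is an index $i$ with $\int_{\R^d}x_i^2 w(x)\,{\rm d}x=\infty$; the corresponding marginal $\tilde w(s):=\int_{\R^{d-1}}w(s\,e_i+y)\,{\rm d}y$ is a probability density on $\R$ with $\int s\,\tilde w(s)\,{\rm d}s=0$, $\int|s|\,\tilde w(s)\,{\rm d}s\leqslant\int|x|w(x)\,{\rm d}x<\infty$, and $\int s^2\tilde w(s)\,{\rm d}s=\infty$. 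Let $S_n=X_1+\cdots+X_n$ for $X_i$ i.i.d.\ with density $w$; since $|e_i\cdot S_n|\leqslant|S_n|$,
$$
\int_{|x|\leqslant R}(\star^n w)(n^{1/2}x)n^{d/2}\,{\rm d}x\;\leqslant\;\int_{|s|\leqslant R}(\star^n\tilde w)(n^{1/2}s)n^{1/2}\,{\rm d}s,
$$
so it suffices to prove the lemma for $\tilde w$ on $\R$.

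Write $\phi:=\widehat{\tilde w}$, so the density of $e_i\cdot S_n/n^{1/2}$ has Fourier transform $\phi(\cdot/n^{1/2})^n$. The crucial estimate is that $(1-|\phi(h)|^2)/h^2\to\infty$ as $h\to 0$. Since $1-\mathrm{Re}\,\phi(h)=2\int_{\R}\sin^2(\pi h s)\tilde w(s)\,{\rm d}s$ and $\sin^2(\pi h s)/h^2\to\pi^2 s^2$ pointwise with nonnegative integrands, Fatou's lemma gives $\liminf_{h\to 0}(1-\mathrm{Re}\,\phi(h))/h^2\geqslant 2\pi^2\int s^2\tilde w(s)\,{\rm d}s=\infty$. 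Meanwhile, the bound $|\sin(2\pi h s)/h|\leqslant 2\pi|s|$, the integrability of $s\mapsto|s|\tilde w(s)$, and $\int s\,\tilde w(s)\,{\rm d}s=0$ yield, via dominated convergence, $\mathrm{Im}\,\phi(h)/h\to 0$. Combining these two facts, $(1-|\phi(h)|^2)/h^2=\bigl[2(1-\mathrm{Re}\,\phi(h))-(1-\mathrm{Re}\,\phi(h))^2-(\mathrm{Im}\,\phi(h))^2\bigr]/h^2\to\infty$. Consequently, for each fixed $k\neq 0$, $n(1-|\phi(k/n^{1/2})|^2)=k^2\cdot(1-|\phi(k/n^{1/2})|^2)/(k/n^{1/2})^2\to\infty$, and hence $|\phi(k/n^{1/2})|^n\to 0$.

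To conclude, let $\rho(s):=e^{1/2-s^2/(2R^2)}$, which satisfies $\rho\geqslant\1_{[-R,R]}$. Then
$$
\int_{|s|\leqslant R}(\star^n\tilde w)(n^{1/2}s)n^{1/2}\,{\rm d}s\;\leqslant\;\int_{\R}\rho(s)(\star^n\tilde w)(n^{1/2}s)n^{1/2}\,{\rm d}s\;=\;\int_{\R}\widehat{\rho}(k)\,\overline{\phi(k/n^{1/2})^n}\,{\rm d}k,
$$
the last equality by Fourier inversion of $\rho$ and Fubini (valid because $\widehat\rho$ and the density are both in $L^1(\R)$). Since $|\phi(k/n^{1/2})^n|\leqslant 1$ with pointwise limit $0$ for $k\neq 0$, and $\widehat\rho$ is a Gaussian (hence integrable), dominated convergence gives the desired limit. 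The main obstacle is the estimate $(1-|\phi(h)|^2)/h^2\to\infty$; this is precisely where the infinite-variance hypothesis is used, and once it is in hand the remaining steps are routine Fourier analysis.
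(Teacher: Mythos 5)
Your proof is correct, but it takes a genuinely different route from the paper's. The paper symmetrizes ($X_j - \widetilde X_j$) so that $w$ may be taken even, truncates $w$ to a large centered cube $A$ to obtain bounded random variables with large (but finite) variance, and then applies the multivariate Berry--Esseen theorem to the truncated part, obtaining a quantitative bound on $\mathbb P(W_n \in B_R)$ that is small uniformly over translates of $B_R$. Your argument instead reduces to a single coordinate marginal $\tilde w$ with infinite variance using $|e_i\cdot S_n|\leqslant |S_n|$, and works with the characteristic function $\phi = \widehat{\tilde w}$: the key input is that $(1-|\phi(h)|^2)/h^2 \to \infty$ as $h\to0$ (Fatou on the real part, dominated convergence with the finite first moment on the imaginary part), which forces $n\bigl(1-|\phi(k/n^{1/2})|^2\bigr)\to\infty$ and hence $|\phi(k/n^{1/2})|^n\to 0$ pointwise for $k\neq 0$; a Gaussian majorant of the indicator of $[-R,R]$ and a Fubini/Parseval identity then transfer this pointwise decay into decay of $\mathbb P(|e_i\cdot S_n|/\sqrt n \leqslant R)$ by dominated convergence. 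Each approach has its advantages: yours is self-contained and avoids quantitative CLT estimates entirely, whereas the paper's probabilistic argument sidesteps all Fourier computations and carries over essentially verbatim to distributions that are not absolutely continuous. Two small points worth making explicit in your write-up: the hypothesis ``mean zero'' is read throughout as including finiteness of $\int|x|w$, which you use in bounding $\mathrm{Im}\,\phi(h)/h$ (this is indeed how the lemma is invoked in the proof of Theorem~\ref{theo:decay}); and the step from $n(1-|\phi(k/n^{1/2})|^2)\to\infty$ to $|\phi(k/n^{1/2})|^n\to 0$ uses the elementary bound $a^{n/2}\leqslant e^{-(n/2)(1-a)}$ with $a=|\phi|^2$.
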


\begin{proof}   Let $X_1,\dots,X_n$ be $n$ independent samples from the density $w$, and let $B_R$ denote the centered ball of radius $R$. The quantity in \eqref{CLT2} is $p_{n,R} := \mathbb{P}(n^{-1/2}\sum_{j=1}^n X_j\in B_R)$.
Let $\widetilde X_1,\dots,\widetilde X_n$ be another $n$ independent samples from the density $w$, independent of the first $n$. Then also $p_{n,R} := \mathbb{P}(-n^{-1/2}\sum_{j=1}^n \widetilde X_j\in B_R)$. By the independence and the triangle inequality,
$$
p_{n,R}^2 \leq  \mathbb{P}(n^{-1/2}\sum_{j=1}^n (X_j -\widetilde X_j)\in B_{2R})\ .
$$
The random variable $X_1 - \widetilde X_1$ has zero  mean and infinite variance and an even density. Therefore, without loss of generality, we may assume that $w(x) = w(-x)$ for all $x$. 

Pick $\epsilon>0$, and choose a large value $\sigma_0$ such that $(2\pi \sigma_0^2)^{-d/2}R^d|B| < \epsilon/3$, where $|B|$ denotes the volume of the unit ball $B$.   The point of this is that if 
$G$ is a centered Gaussian random variable with variance {\em at least} $\sigma_0^2$, the probability that $G$ lies in {\em any} particular translate $B_R+y$ of the ball of radius $R$  is no more than $\epsilon/3$.  Let $A\subset \R^d$ be a centered cube such that 
$$
 \int_{A}|x|^2w(x){\rm d}x =: \sigma^2 \geqslant 2\sigma_0^2 \quad{\rm and}\quad \int_{A}w(x){\rm d}x > \tfrac34\ ,
 $$
 and note that since $A$ and $w$ are even,  ${\displaystyle  \int_{A} x w(x){\rm d}x = 0}$.

 It is then easy to find mutually independent random variables $X$, $Y$ and $\alpha$ such that 
$X$ takes values in $A$ and, has zero mean and variance $\sigma^2$, $\alpha$ is a Bernoulli variable with success probability $\int_{A}w(x){\rm d}x$, and finally such that $\alpha X + (1-\alpha)Y $  has the probability density $w$. Taking independent identically distributed (i.i.d.) sequences of such random variables, $w(n^{1/2}x)n^{d/2}$ is the probability density of
${\displaystyle W_n :=  n^{-1/2}\sum_{j=1}^n \alpha_j X_j +  n^{-1/2} \sum_{j=1}^n(1-\alpha_j)Y_j}$,  and we seek to estimate
 the expectation of $1_{B_R}(W_n)$. We first take the conditional expectation, given the values of the $\alpha$'s and the $Y$'s, and we define $\hat{n} =  \sum_{j=1}^n\alpha_j$. These conditional expectations have the form 
${\mathbb E}\left[ 1_{B_R + y}\left(\sum_{j=1}^n n^{-1/2}\alpha_j X_j \right)\right]$
for some translate $B_R +y$ of  $B_R$, the ball of radius $R$.   The sum $n^{-1/2}\sum_{j=1}^n \alpha_j X_j$ is actually  the sum of $\hat{n}$ i.i.d. random variables with mean zero and variance  $\sigma^2/n$.  The probability that $\hat{n}$ is significantly less than $\frac34 n$ is negligible for large $n$; by classical estimates associated with the Law of Large Numbers,   for all $n$ large enough, the probability that $\hat{n} < n/2$  is no more than $\epsilon/3$.  Now let $Z$ be a Gaussian random variable with mean zero and variance 
$\sigma^2\hat{n}/n$ which is at least $\sigma^2_0$ when $\hat{n} \geqslant n/2$. Then by the multivariate version \cite{R19} of the  Berry-Esseen Theorem \cite{B41,E42}, a version of the Central Limit Theorem 
with rate information,  there is a constant $K_d$ depending only on $d$ such that
$${\textstyle 
\left|{\mathbb E}\left[ 1_{B_R + y}\left(\sum_{j=1}^n n^{-1/2}\alpha_j X_j \right)\right]  -  {\mathbb P}\{Z \in B_R + y\}\right|  \leqslant K_d \hat{n} \frac{{\mathbb E}|X_1|^3}{n^{3/2}}  \leqslant K_d  \frac{{\mathbb E}|X_1|^3}{n^{1/2}}
\ .}
$$
Since $A$ is bounded, ${\mathbb E}|X_1|^3 < \infty$,  and hence for all sufficiently large $n$,  when  $\hat{n} \geqslant n/2$.
$${\textstyle 
{\mathbb E}\left[ 1_{B_R + y}\left(\sum_{j=1}^n n^{-1/2}\alpha_j X_j \right)\right]  \leqslant \frac23 \epsilon
\ .}
$$
Since this is uniform in $y$, we finally obtain  ${\mathbb P}(W_n \in B_R) \leq \epsilon$ for all sufficiently large $n$. Since $\epsilon>0$ is arbitrary, \eqref{CLT2} is proved.
\end{proof}

We close by thanking the anonymous referee for useful suggestions.


\bibliographystyle{amsalpha}

\end{document}